\documentclass[twoside,12pt]{article}

\usepackage{amssymb}
\usepackage{mathrsfs}
\usepackage{tikz}
\usepackage{mathrsfs}
\usepackage{amssymb}
\usepackage{amsmath}
\usepackage{bm}
\usepackage{graphicx}
\usepackage{pst-poly}  

\pagestyle{myheadings}
  \markboth{\protect \footnotesize \hfill   H. Liang and J.-M. Xu
  \hfill}
  {\protect \footnotesize \hfill On Seymour's Second Neighborhood Conjecture of $m$-free digraph
  \hfill}
  \topmargin 20pt
  \oddsidemargin 10pt
  \evensidemargin 10pt
  \textheight 23 true cm
  \textwidth 15 true cm
  \parindent 20pt
  \parskip 0.1cm
\newtheorem{theorem}{Theorem}[section]%
\newtheorem{lemma}[theorem]{Lemma}%
\newtheorem{corollary}[theorem]{Corollary}%
\newtheorem{conjecture}[theorem]{Conjecture}

\newenvironment{proof}[1][Proof]{\noindent\textit{#1: } }{\hfill\rule{1mm}{2mm}}

\makeatletter \@addtoreset{equation}{section} \makeatother

  \begin{document}

  \title{On Seymour's Second Neighborhood Conjecture of $m$-free
  Digraphs
 \thanks{Supported by NNSF of China (No. 11571044).}
  }
 \author{Hao Liang\thanks{Corresponding author: lianghao@mail.ustc.edu.cn}\,\,
 \\
  {\small Department of Mathematics}\\
  {\small Southwestern University of Finance and Economics}\\
  {\small Chengdu {\rm 611130}, China}\\
  \\
  Jun-Ming Xu \\
  {\small School of Mathematical Sciences}\\
  {\small University of Science and Technology of China}\\
 {\small Wentsun Wu Key Laboratory of CAS}\\
  {\small Hefei {\rm 230026}, China}\\
 }
\date{}

\maketitle {\centerline{\bf\sc Abstract}\vskip 8pt  This paper gives an approximate result
related to Seymour's Second Neighborhood conjecture, that is, for
any $m$-free digraph $G$, there exists a vertex $v\in V(G)$ and a
real number $\lambda_m$ such that $d^{++}(v)\geq \lambda_m d^+(v)$,
and $\lambda_m \rightarrow 1$ while $m \rightarrow +\infty$. This
result generalizes and improves some known results in a sense.

\vskip6pt\noindent{\bf Keywords}: Digraph, Directed cycle, Seymour's Second Neighborhood Conjecture

\noindent{\bf AMS Subject Classification: }\ 05C20, 05C38

\section{Introduction}

Throughout this article, all digraphs are finite, simple and
digonless. As usual, for a vertex $v$ of the digraph $G$, we denote
by $N^+_G(v)$ the set of out-neighbors of $v$, $N^{++}_G(v)$ the set
of vertices at distance 2 from $v$. Let $d_G^+(v)=|N_G^+(v)|$ (the
out-degree of $v$) and $d_G^{++}(v)=|N_G^{++}(v)|$. We will omit the
subscript if the digraph is clear from the context.

In 1990, Seymour~\cite{DL95} proposed the following conjecture.

\begin{conjecture}\label{cjt1.1}\textnormal{(Seymour's Second Neighborhood Conjecture)}
For any digraph $G$, there exists a vertex $v$ in $G$ such that
$d^{++}(v)\geq d^+(v)$.
\end{conjecture}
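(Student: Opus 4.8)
Since a simple averaging argument cannot work --- on a long directed path one checks that $\sum_{v}\bigl(d^{++}(v)-d^{+}(v)\bigr)$ is negative, so there is in general no ``positive average'' from which to extract a witness vertex --- the plan is to single out a distinguished vertex by means of a \emph{median order}. A median order of $G$ is a linear ordering $\sigma=(v_{1},v_{2},\dots,v_{n})$ of $V(G)$ that maximizes the number of forward arcs $v_{i}v_{j}$ with $i<j$. The candidate vertex will be the \emph{feed vertex} $f=v_{n}$, and the goal is to prove directly that $d^{++}(f)\geq d^{+}(f)$.

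First I would extract the local optimality (feedback) inequalities forced by maximality: for any index $i$, relocating $v_{i}$ to the end of $\sigma$ cannot increase the forward-arc count, and dually relocating $v_{j}$ to the front cannot either. These elementary swaps yield, for every interval of $\sigma$, that a vertex sends at least as many arcs into the part of the interval ahead of it as it receives from that part, and symmetrically on the left. In particular every out-neighbour and every in-neighbour of $f$ lies among $v_{1},\dots,v_{n-1}$, and the feedback inequalities tightly constrain how arcs run between $N^{+}(f)$, $N^{-}(f)$ and the remaining vertices.

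Second, I would build an injection $\phi\colon N^{+}(f)\to N^{++}(f)\setminus N^{+}(f)$, which immediately gives $|N^{++}(f)|\geq|N^{+}(f)|$. The natural route is to process $\sigma$ from right to left, maintaining a partial injection as an invariant, and at each newly revealed out-neighbour $u\in N^{+}(f)$ to invoke the feedback inequality on the interval $\{u,\dots,f\}$ so as to produce a still-unused target $w$ with $f\to u'\to w$ for some $u'\in N^{+}(f)$ and $w\notin N^{+}(f)$; the median property is precisely what should guarantee that a free target exists at every step.

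The main obstacle --- and the reason the conjecture is still open in full generality --- is the presence of \emph{non-adjacent pairs}. When $G$ is a tournament every two vertices are comparable, the feedback inequalities become clean domination counts, and the injection above closes, recovering the Havet--Thomass\'e theorem. For an arbitrary digraph a missing arc weakens the relevant count by one, and a feed vertex many of whose out-neighbours share many common non-neighbours can defeat the injection, so $f$ itself need not be a second-neighbourhood vertex. The crux is therefore to control these non-adjacencies: I would pass to a minimal counterexample, treat separately the vertices of small out-degree (where the inequality can be forced directly, as in the Kaneko--Locke and Fisher arguments), and for the remaining dense part show that the feedback inequalities still supply enough distinct distance-two vertices to absorb the deficit created by each missing arc. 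Making this absorption argument quantitative --- so that the distance-two vertices always outnumber the out-neighbours despite the missing arcs --- is exactly where the difficulty of the full conjecture is concentrated, and is the step I expect to be the genuine obstacle.
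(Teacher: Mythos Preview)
The statement you are attempting to prove is labeled as a \emph{conjecture} in the paper, and the paper does not prove it. Seymour's Second Neighborhood Conjecture remains open; the paper's actual result is Theorem~\ref{thm1.2}, an approximate version for $m$-free digraphs asserting only that some vertex $v$ satisfies $d^{++}(v)\geq \lambda_m\, d^{+}(v)$ for a constant $\lambda_m<1$ tending to $1$ as $m\to\infty$. So there is no ``paper's own proof'' of Conjecture~\ref{cjt1.1} to compare against.

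As for your proposal itself: it is not a proof but a sketch of the Havet--Thomass\'e median-order strategy, together with an honest admission that the key step fails outside tournaments. You correctly identify that the injection $\phi\colon N^{+}(f)\to N^{++}(f)\setminus N^{+}(f)$ goes through when every pair of vertices is adjacent, and you correctly diagnose that missing arcs break the feedback inequalities by exactly the margin needed. But your final paragraph is a description of the difficulty, not a resolution of it: passing to a minimal counterexample, handling small out-degree separately, and hoping the remaining ``dense part'' supplies enough distance-two vertices is precisely the program that has not been completed in the literature. No quantitative absorption argument is actually given, and you yourself flag this as ``the genuine obstacle.'' In short, the proposal accurately surveys why the conjecture is hard, but it does not close the gap, and neither does the paper.
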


We call the vertex $v$ in Conjecture~\ref{cjt1.1} a {\it Seymour
vertex}. In 2001, Kaneko and Locke~\cite{KL01} showed that any
digraph with the minimum outdegree less than 7 has a Seymour vertex.
In 2007, Fisher~\cite{F07} showed that any tournament has a Seymour
vertex; Fidler and Yuster~\cite{FY07} proved that any tournament
minus a star or a sub-tournament, and any digraph $G$ with minimum
degree $|V(G)|-2$ have Seymour vertices. {  In 2008,
Hamidoune~\cite{H08} proved that any vertex-transitive digraph has a
Seymour vertex. In 2013, Llad\'{o}~\cite{L13} proved that any
digraph with large connectivity has a Seymour vertex. In 2016, Cohn
{\it et al.}~\cite{CGHZ16} gave a probabilistic statement about
Seymour's conjecture and proved that almost surely there are a large
number of Seymour vertices in random tournaments and even more in
general random digraphs}. For a general digraph,
Conjecture~\ref{cjt1.1} is still open.

Another approach to Conjecture~\ref{cjt1.1} is to determinate the
maximum value of $\lambda$ such that there is a vertex $v$ in $G$
satisfying $d^{++}(v)\geq \lambda\, d^+(v)$ for any digraph $G$. In
2003, Chen, Shen and Yuster~\cite{CSY03} gave
$\lambda=0.657298\cdots$, which is the unique real root of the
polynomial $2x^3+x^2-1$. Furthermore, they improved this bound to
$0.67815\cdots$  mentioned in the end of the
article~\cite{CSY03}.

A digraph $G$ is called to be {\it $m$-free} if $G$ contains no
directed cycles of $G$ with length at most $m$. In 2010, Zhang and
Zhou~\cite{ZZ10} showed that for any $3$-free digraph $G$, there
exists a vertex $v$ in $G$ such that $d^{++}(v)\geq \lambda\,
d^+(v)$, where $\lambda= 0.6751\cdots$ { is the only real root in
the interval $(0,1)$ }of the polynomial $x^3+3x^2-x-1$. In this
paper, we consider general $m$-free digraphs and obtain the
following result.

\begin{theorem}\label{thm1.2}
{ Let $m$ be an arbitrarily fixed integer with $m\geq 3$ and $G$ be
an $m$-free digraph, then there exists a vertex $v$ in $G$ such that
$d^{++}(v)\geq \lambda_m d^+(v)$, where $\lambda_m$ is the only real
root in the interval $(0,1)$ of the polynomial}
  \begin{equation}\label{e1.1}
{  g_m(x)=2x^3-(m-3)x^2+(2m-4)x-(m-1)}.
 \end{equation}
Furthermore, $\lambda_m$ is increasing with $m$, and
$\lambda_m\rightarrow 1$ while $m\rightarrow +\infty$.

\end{theorem}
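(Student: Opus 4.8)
The plan is to argue the existence part by contradiction: assume that every vertex $v$ of the $m$-free digraph $G$ satisfies $d^{++}(v)<\lambda_m\,d^+(v)$, and force either a violated global count or a forbidden short cycle. First I would dispose of the trivial cases. Any vertex with $d^+(v)=0$ already satisfies $d^{++}(v)=0\geq\lambda_m d^+(v)$, so we may assume the minimum out-degree $\delta^+(G)\geq 1$, and we may pass to a single weakly connected component. With $\delta^+(G)\geq 1$ in force, the whole problem reduces to an \emph{unconditional} statement: it suffices to exhibit a nonnegative weighting $w\colon V(G)\to\mathbb{R}_{\geq 0}$, not identically zero, such that $\sum_v w(v)\,d^{++}(v)\geq\lambda_m\sum_v w(v)\,d^+(v)$. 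Indeed, if every vertex were bad, then summing the strict inequalities $w(v)d^{++}(v)<\lambda_m w(v)d^+(v)$ (strict wherever $w(v)>0$, since then $d^+(v)\geq1$) would contradict this weighted inequality.

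The engine of the argument is the elementary walk-counting identity at a single vertex,
\begin{equation}\label{eq:walks}
\sum_{x\in N^+(v)} d^+(x)\;=\;e\big(N^+(v)\big)\;+\;\sum_{y\in N^{++}(v)}\big|N^-(y)\cap N^+(v)\big|,
\end{equation}
where $e(N^+(v))$ counts the arcs induced inside the first out-neighbourhood: every length-$2$ walk $v\to x\to y$ ends either inside $N^+(v)$, accounting for one induced arc, or in $N^{++}(v)$, and it cannot return to $v$ because $G$ is digonless. The difficulty in \eqref{eq:walks} is that its right-hand sum weights each second-neighbour $y$ by the multiplicity $|N^-(y)\cap N^+(v)|$ rather than by $1$, so a naive estimate only recovers the ratio $\tfrac12$. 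To extract the cubic threshold I would sum \eqref{eq:walks} against the weighting $w$ and control the two loss terms, the induced-arc term $\sum_v w(v)e(N^+(v))$ and the excess-multiplicity term, the optimal $w$ being pinned down as the dual solution of the resulting linear program. The cubic nature of $g_m$ reflects that this balancing must be carried through three successive out-neighbourhoods, mirroring the general-digraph bound $2x^3+x^2-1$ of Chen--Shen--Yuster, which is recovered here as the formal case $m=2$, since $g_2(x)=2x^3+x^2-1$.

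The point at which $m$-freeness enters, and the step I expect to be the main obstacle, is the quantitative control of these loss terms. The structural input is that in an $m$-free digraph every in-neighbour of $v$ lies at directed distance at least $m$ from $v$: if an in-neighbour were reachable from $v$ by a path of length $j\leq m-1$, that path together with the returning arc would close a directed cycle of length $j+1\leq m$. Thus $N^-(v)$ is disjoint from the first $m-1$ out-levels $N^+(v),N^{++}(v),\dots$, which restricts how walks can fold back into earlier neighbourhoods and bounds the induced-arc and repeated-second-neighbour contributions linearly in $m$. Tracking these bounds through the three stages, and balancing them against the weighting, is precisely what should convert the constant coefficients of $2x^3+x^2-1$ into the $m$-dependent coefficients $-(m-3)$, $2m-4$ and $-(m-1)$ of $g_m$. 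The delicate part is making the fold-back estimate tight enough to reach \emph{exactly} $g_m$ rather than a weaker cubic while simultaneously certifying the extremal weighting; the clean formulation would be that a single badly chosen vertex is forced, within three steps, either to break the weighted count or to create a directed cycle of length at most $m$.

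Finally I would establish the analytic claims about $\lambda_m$ directly from \eqref{e1.1}. Since $g_m(0)=-(m-1)<0$ and $g_m(1)=2>0$, a root lies in $(0,1)$; and a short computation shows $g_m'(x)=6x^2-2(m-3)x+(2m-4)>0$ on $[0,1]$ for every $m\geq 3$ (indeed $g_m'(0)=2m-4>0$, $g_m'(1)=8>0$, and the minimum of $g_m'$ over $[0,1]$ is positive), so $g_m$ is strictly increasing there and the root $\lambda_m$ is unique and simple. For monotonicity in $m$ I would use $g_{m+1}(x)-g_m(x)=-(1-x)^2$, whence $g_{m+1}(\lambda_m)=g_m(\lambda_m)-(1-\lambda_m)^2<0$; since $g_{m+1}$ is increasing on $[0,1]$ and crosses zero from below, this forces $\lambda_{m+1}>\lambda_m$. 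For the limit I would rewrite \eqref{e1.1} as $g_m(x)=-m(1-x)^2+\big(2x^3+3x^2-4x+1\big)$, so that at the root $m(1-\lambda_m)^2=2\lambda_m^3+3\lambda_m^2-4\lambda_m+1$; the right-hand side stays bounded and tends to $2$ as $x\to1$, so $(1-\lambda_m)^2=O(1/m)$ and therefore $\lambda_m\to1$ as $m\to+\infty$, with the sharper estimate $1-\lambda_m\sim\sqrt{2/m}$.
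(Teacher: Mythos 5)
Your treatment of the analytic claims is correct: uniqueness of $\lambda_m$ in $(0,1)$ via $g_m'>0$ on $[0,1]$, monotonicity via $g_{m+1}(x)-g_m(x)=-(1-x)^2$, and the limit via the rewriting $g_m(x)=-m(1-x)^2+(2x^3+3x^2-4x+1)$, which is in fact a clean variant of the paper's argument (the paper instead compares $\lambda_m$ with the root $\mu_m=\sqrt{m-1}/(\sqrt{m-1}+\sqrt{2})$ of $2x^2-(m-1)(1-x)^2$). But for the existence statement, which is the heart of Theorem~\ref{thm1.2}, what you have written is a plan rather than a proof, and the plan has a genuine gap at exactly the point you yourself flag as ``the main obstacle''. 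Your reduction to exhibiting a weighting $w$ with $\sum_v w(v)\,d^{++}(v)\geq\lambda_m\sum_v w(v)\,d^{+}(v)$ is not a simplification: that weighted statement is equivalent to the theorem (given the theorem, put all mass on a good vertex), so constructing $w$ \emph{is} the whole problem, and neither the weighting, nor the linear program, nor its dual is ever written down. Likewise, your structural observation that in an $m$-free digraph $N^-(v)$ avoids the first $m-1$ out-levels of $v$ is true but is never converted into any quantitative estimate, and no argument is given for why the ``fold-back'' losses should balance to produce exactly the coefficients of $g_m$ in (\ref{e1.1}) rather than some weaker cubic.

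The two ingredients your sketch lacks are precisely what the paper's proof runs on: (i) induction on the number of vertices, applied to the $m$-free subgraph induced by $A=N^+(u)$ for a vertex $u$ of minimum out-degree, which supplies a vertex $w_1\in A$ with $|N_A^{++}(w_1)|\geq\lambda_m|N_A^+(w_1)|$; and (ii) a feedback-arc-set argument, which is where $m$-freeness actually does its quantitative work: writing $A$ as a tournament of order $a$ minus $t$ edges, Lemma~\ref{lem1.6} says $A$ can be made acyclic by deleting only $t/(m-2)$ further edges, so by Lemma~\ref{lem1.5} some vertex of $A$ has out-degree at most $\sqrt{2t/(m-2)}$; played against the contradiction hypothesis this forces $t/a^2>(m-2)(1-\lambda_m)^2/2$, and it is this bound --- not the distance-$m$ observation --- that injects the factor $(m-2)$ into the final quadratic-in-$d$ inequality whose infeasibility at the root of $g_m$ closes the induction. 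Without a substitute for these two steps (or an actual construction of your extremal weighting), the existence half of the theorem remains unproven.
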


Since $G$ is simple and digonless, $G$ is $2$-free. When $m=2$, the
polynomial defined in (\ref{e1.1}) is exactly $2x^3+x^2-1$, and our
result can be considered to be a generalization of Chen {\it et
al.}'s result. When $m=3$, $\lambda_3= 0.6823\cdots$, which improves
Zhang {\it et al.}'s value on $\lambda_3$. When $m=4$, $\lambda_4=
0.7007\cdots$. From Theorem~\ref{thm1.2}, we immediately get the
following corollary.

\begin{corollary}\label{crl1.3}
{ For every $\varepsilon >0$, there is a positive integer $m$ such
that every $m$-free digraph contains a vertex $v$ with
$d^{++}(v)\geq (1-\varepsilon)\, d^+(v)$.}
\end{corollary}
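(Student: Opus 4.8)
The plan is to argue by contradiction. Suppose every vertex fails the conclusion, i.e. $d^{++}(v) < \lambda_m\, d^+(v)$ for all $v \in V(G)$; in particular every vertex has positive out-degree, since a sink would satisfy $d^{++}(v) = 0 \ge \lambda_m\, d^+(v)$. The first ingredient is the elementary ``level'' inequality valid in any digonless digraph: for an arc $v \to u$ every out-neighbour of $u$ lies in $N^+(v) \cup N^{++}(v)$ and cannot be $v$ itself, so
\[
  d^+(u) = \bigl|N^+(u)\cap N^+(v)\bigr| + \bigl|N^+(u)\cap N^{++}(v)\bigr| \le \bigl|N^+(u)\cap N^+(v)\bigr| + d^{++}(v).
\]
Together with the standing assumption this controls how fast out-degrees grow along an arc and, more importantly, isolates the quantity one must bound: the overlap $\bigl|N^+(u)\cap N^+(v)\bigr|$, i.e. the extent to which the second neighbourhood folds back onto the first.

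The decisive use of the hypothesis is that $G$ has no directed cycle of length at most $m$, which forbids short return arcs among the successive neighbourhood levels of a fixed vertex: if one follows a directed path $v \to u_1 \to \cdots \to u_k$ with $k \le m-1$, no arc may run from $u_k$ back to $v$ or to an earlier $u_i$, since such an arc would close a directed cycle of length at most $m$. Propagating this along all short paths out of $v$ forces the iterated out-neighbourhoods to keep producing genuinely new distance-two vertices for roughly $m$ steps before any overlap with $N^+(v)$ is permitted, so the overlap term above is suppressed as $m$ grows. I would encode this by assigning to each vertex a weight built from a single real parameter $x$ (the eventual $\lambda_m$), summing the resulting local inequalities over $V(G)$ — or over the arcs of a longest directed path or an extremal neighbourhood — and collecting the contributions of the $m$ forbidden cycle lengths. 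Choosing $x = \lambda_m$ should make the aggregate inequality read $g_m(\lambda_m) \le 0$, contradicting $g_m(\lambda_m)=0$ together with the strict assumption.

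The hard part will be the exact bookkeeping of the overlaps across all admissible path lengths $1, 2, \dots, m-1$: one must show that the forbiddance of cycles up to length $m$ enters the weighted sum in precisely the pattern producing the coefficients $-(m-3)$, $2m-4$ and $-(m-1)$ of $g_m$, rather than some weaker linear-in-$m$ expression. Matching the coefficient of $x^2$ and the affine-in-$m$ terms simultaneously, while verifying that no cycle length is double-counted, is where the argument must be sharp; the clean consistency check is that at $m=2$ the scheme collapses to the Chen--Shen--Yuster polynomial $2x^3+x^2-1$.

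The structural claims about $\lambda_m$ then follow from elementary analysis of $g_m$ once one records the factorisation
\[
  g_m(x) = (2x-1)(x^2+2x-1) - m(x-1)^2 .
\]
Since $g_m(0) = -(m-1) < 0$ and $g_m(1) = 2 > 0$, a root exists in $(0,1)$, and a sign check on $g_m'$ gives its uniqueness there. For fixed $x \in (0,1)$ the term $-m(x-1)^2$ is strictly decreasing in $m$, whence $g_{m+1}(\lambda_m) = g_m(\lambda_m) - (\lambda_m-1)^2 < 0$; as $g_{m+1}(1) > 0$, the unique root $\lambda_{m+1}$ lies to the right of $\lambda_m$, which proves monotonicity. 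The limit is immediate: at the root $(1-\lambda_m)^2 = (2\lambda_m-1)(\lambda_m^2+2\lambda_m-1)/m$, and since the numerator stays bounded on $(0,1)$ this forces $1-\lambda_m \to 0$, i.e. $\lambda_m \to 1$, which in turn yields Corollary~\ref{crl1.3}.
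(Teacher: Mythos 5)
There is a genuine gap. Corollary~\ref{crl1.3} rests entirely on the existence statement: every $m$-free digraph has a vertex $v$ with $d^{++}(v)\geq \lambda_m d^+(v)$. In the paper this is the first (and main) conclusion of Theorem~\ref{thm1.2}, and the corollary follows immediately once one also knows $\lambda_m\to 1$; a blind proof could legitimately just cite that theorem. You instead chose to re-derive it, but what you offer is a plan, not a proof: the ``level inequality'' and the proposed vertex-weighting scheme are never executed, and you yourself flag that ``the exact bookkeeping'' needed to make the aggregate inequality produce the coefficients of $g_m$ is open. There is no evidence the scheme closes: the paper's actual argument is an induction on the number of vertices that works inside $A=N^+(u)$ for a vertex $u$ of minimum out-degree, counts edges between $A$ and $B=N^{++}(u)$, applies the induction hypothesis to the subdigraph induced by $A$, and crucially invokes a feedback-arc-set result for $m$-free digraphs (Lemmas~\ref{lem1.5}--\ref{lem1.7}: if an $m$-free digraph is a tournament minus $t$ edges, some vertex has out-degree at most $\sqrt{2t/(m-2)}$). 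This is where the dependence on $m$ enters $g_m$; nothing in your sketch plays that role, so the combinatorial core of the corollary is missing.

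Your analytic treatment of $\lambda_m$, by contrast, is correct and in part nicer than the paper's. The factorisation $g_m(x)=(2x-1)(x^2+2x-1)-m(x-1)^2$ is right (expanding gives exactly $2x^3-(m-3)x^2+(2m-4)x-(m-1)$), your monotonicity step $g_{m+1}(\lambda_m)=g_m(\lambda_m)-(1-\lambda_m)^2<0$ reproduces the paper's argument, and your limit argument --- at the root $(1-\lambda_m)^2=(2\lambda_m-1)(\lambda_m^2+2\lambda_m-1)/m$ with bounded numerator, hence $\lambda_m\to 1$ --- is a clean alternative to the paper's construction of the lower bound $\mu_m=\sqrt{m-1}/(\sqrt{m-1}+\sqrt{2})$. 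But these properties of $\lambda_m$ only yield the corollary once the existence of the vertex is established, and that step is not proven in your proposal.
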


{
 The first conclusion in Theorem~\ref{thm1.2} is our
main result. The proof proceeds by induction on the number of
vertices. In the induction step, we assume to the contrary that
$d^{++}(v)<\lambda_m d^+(v)$ for any vertex $v$ in $G$, where
$\lambda_m$ is the unique real root of $g_m(x)$ in the interval
$(0,1)$. Then we show that the assumption leads to a contradiction.
To this end, we need the following lemmas.

\begin{lemma}\label{lem1.4}
For $m\geq 3$, the polynomial $g_m(x)$ defined in (\ref{e1.1}) is
strictly increasing and has a unique real root in the interval
$(0,1)$.
\end{lemma}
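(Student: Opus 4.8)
The plan is to separate the two assertions—monotonicity and the existence/uniqueness of a root in $(0,1)$—and to reduce both to elementary sign checks. First I would differentiate, obtaining
\[
g_m'(x)=6x^2-2(m-3)x+(2m-4).
\]
Since the uniqueness of the root will follow from monotonicity on the relevant interval $(0,1)$, the goal of the first part is to show $g_m'(x)>0$ for $x\in(0,1)$ and every $m\geq 3$. Rather than analyzing the discriminant of $g_m'$ (which is awkward because it changes sign for large $m$), I would isolate the dependence on $m$.

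The key observation is the rewriting
\[
g_m'(x)=6x^2+6x-4+2m(1-x).
\]
For $x\in(0,1)$ the factor $1-x$ is strictly positive, so $g_m'(x)$ is strictly increasing in $m$; hence for $m\geq 3$ it is bounded below by its value at $m=3$, namely $g_3'(x)=6x^2+6x-4+6(1-x)=6x^2+2>0$. This yields $g_m'(x)\geq 6x^2+2>0$ on $(0,1)$ uniformly in $m\geq 3$, so $g_m$ is strictly increasing there, which is exactly what the subsequent uniqueness argument requires.

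For the second part I would evaluate $g_m$ at the endpoints: $g_m(0)=-(m-1)<0$ since $m\geq 3$, while $g_m(1)=2-(m-3)+(2m-4)-(m-1)=2>0$. By continuity and the intermediate value theorem, $g_m$ has at least one root in $(0,1)$, and by the strict monotonicity established above this root is unique. Combining the two parts completes the proof.

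I do not expect a genuine obstacle here, as the lemma is a routine calculus fact; the only subtlety is choosing an argument that is uniform in $m$. The rewriting $g_m'(x)=6x^2+6x-4+2m(1-x)$ is what makes the monotonicity-in-$m$ reduction to the base case $m=3$ clean and sidesteps case distinctions on the location of the critical points of $g_m'$, which would otherwise appear once $m$ grows large.
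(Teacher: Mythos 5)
Your proof is correct and follows essentially the same route as the paper: differentiate, rewrite $g_m'(x)$ so that the $m$-dependence sits on a factor of $(1-x)$, conclude $g_m'(x)>0$ on $(0,1)$, and finish with the sign check $g_m(0)=-(m-1)<0$, $g_m(1)=2>0$ plus the intermediate value theorem. The paper groups the derivative as $6x^2+2x+(2m-4)(1-x)$, making each term nonnegative at once, while you group it as $6x^2+6x-4+2m(1-x)$ and reduce to the case $m=3$; this is a cosmetic difference, not a different argument.
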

\begin{proof}
Since
 $
 g_m(x)=2x^3-(m-3)x^2+(2m-4)x-(m-1),
 $
we have
 $$
 g_m'(x)=6x^2-2(m-3)x+(2m-4)=6x^2+2x+(2m-4)(1-x).
 $$
Clearly, $g_m'(x)>0$ when $m\geq 3$ and $x\in (0,1)$, which implies
$g_m(x)$ is strictly increasing in $[0,1]$. Since $g_m(0)=-m+1<0$
and $g_m(1)=2>0$, it follows that there is a unique real root in the
interval $(0,1)$ of the polynomial.
\end{proof}
 }


\begin{lemma}\textnormal{(Hamburger {\it et al.}~\cite{hhk07})}\label{lem1.5}
If one can delete $t$ edges from a digraph $G$ to make it acyclic,
then there exists a vertex $v$ in $G$ such that
$d^+(v)\leq\sqrt{2t}$.
\end{lemma}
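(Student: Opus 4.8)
The plan is to exploit a topological ordering of the acyclic digraph obtained after the deletions, combined with an averaging argument over a carefully sized block of vertices near the end of that ordering. Let $F$ be a set of $t$ arcs whose removal makes $G$ acyclic, and fix a topological ordering $v_1,v_2,\ldots,v_n$ of $G-F$, so that every arc of $G-F$ points from a smaller index to a larger one. The key structural observation is that any arc of $G$ directed \emph{backward} in this ordering (from $v_j$ to $v_i$ with $j>i$) cannot belong to $G-F$, and hence must lie in $F$; consequently $G$ has at most $t$ backward arcs. This single observation converts the hypothesis ``$t$ arcs suffice'' into a clean upper bound on the number of backward arcs, which is what the counting will use.

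Before the main count I would dispose of a trivial regime: if $\sqrt{2t}\ge n-1$, then every vertex already satisfies $d^+(v)\le n-1\le\sqrt{2t}$, so any vertex works. Thus I may assume $\sqrt{2t}<n-1$ and set the block size $k=\lceil\sqrt{2t}\rceil$, which then satisfies $1\le k\le n-1$, so the last $k$ vertices are well defined.

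Next I would estimate the total out-degree of the last $k$ vertices $v_{n-k+1},\ldots,v_n$. For such a vertex $v_i$, its forward out-arcs reach only the $n-i$ vertices of larger index, and since $i\ge n-k+1$ this is at most $k-1$; summing the forward contributions across the block gives at most $0+1+\cdots+(k-1)=\binom{k}{2}$. The backward out-arcs emanating from these vertices are all among the at most $t$ backward arcs of $G$, so
\begin{equation*}
\sum_{i=n-k+1}^{n} d^+(v_i)\le \binom{k}{2}+t=\frac{k(k-1)}{2}+t .
\end{equation*}
By averaging, some vertex $v$ in the block satisfies $d^+(v)\le \frac{k-1}{2}+\frac{t}{k}$. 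Using $k=\lceil\sqrt{2t}\rceil$, the inequalities $k\ge\sqrt{2t}$ and $k-1<\sqrt{2t}$ give $\frac{t}{k}\le\frac{\sqrt{2t}}{2}$ and $\frac{k-1}{2}<\frac{\sqrt{2t}}{2}$, whence $d^+(v)<\sqrt{2t}$, even slightly stronger than the claim.

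The step I expect to be the real crux is the choice of the block size $k$: it is selected precisely to balance the quadratic forward cost $\binom{k}{2}$ against the linear backward budget $t$, and minimizing $\frac{k-1}{2}+\frac{t}{k}$ is exactly what produces the threshold $k\approx\sqrt{2t}$ and hence the final $\sqrt{2t}$ bound. The only accompanying technical point is ensuring this $k$ does not exceed $n$, which is what the trivial case $\sqrt{2t}\ge n-1$ takes care of; everything else is elementary counting once the backward-arc observation is in place.
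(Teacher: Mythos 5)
Your argument is correct, and it is worth noting that the paper itself gives no proof of this lemma at all: it is quoted verbatim as a known result of Hamburger, Haxell and Kostochka~\cite{hhk07}, so there is no internal proof to compare against. Your proof is a legitimate, self-contained derivation: the observation that every backward arc relative to a topological ordering of $G-F$ must lie in $F$, followed by averaging the out-degrees over the last $k=\lceil\sqrt{2t}\rceil$ vertices (forward contribution at most $\binom{k}{2}$, backward contribution at most $t$), is exactly the kind of counting that establishes the bound, and your choice of $k$ balances the two terms so that $\frac{k-1}{2}+\frac{t}{k}<\sqrt{2t}$. One edge case is stated slightly too casually: when $t=0$ you get $k=\lceil\sqrt{0}\rceil=0$, so the block is empty and the averaging step is vacuous; but then $G$ itself is acyclic and its sink has out-degree $0=\sqrt{2t}$, so the lemma holds trivially (note that your strengthened strict inequality $d^+(v)<\sqrt{2t}$ cannot hold in this case, only the non-strict one). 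Also, your ``trivial regime'' $\sqrt{2t}\ge n-1$ uses $d^+(v)\le n-1$, which relies on the standing assumption that digraphs are simple; that assumption is indeed in force in this paper, so the step is fine. With the $t=0$ case patched, the proof is complete and elementary.
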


\begin{lemma}\textnormal{(Liang and Xu~\cite{LX13})}\label{lem1.6}
If an $m$-free digraph $G$ is obtained from a tournament by deleting
$t$ edges, then one can delete from $G$ an additional $t/(m-2)$
edges so that the resulting digraph is acyclic.
\end{lemma}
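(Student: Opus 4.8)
The plan is to prove that $G$ has a feedback arc set (a set of arcs whose removal leaves an acyclic digraph) of size at most $t/(m-2)$; deleting exactly these arcs then produces the required acyclic digraph. I would fix a linear order $\sigma=(v_1,\dots,v_n)$ of $V(G)$ minimizing the number of backward arcs (arcs $(v_i,v_j)$ with $i>j$), and let $F$ be this set of backward arcs, so that $G-F$ is acyclic with $\sigma$ as a topological order and $F$ is a minimum feedback arc set. Two properties of $\sigma$ drive the argument: (i) every arc of $G-F$ is forward, so every directed path in $G-F$ is increasing in $\sigma$; and (ii) by minimality of $F$, each $f=(v_i,v_j)\in F$ lies on a cycle whose remaining arcs are in $G-F$, i.e.\ there is a forward directed path in $G-F$ from the head $v_j$ to the tail $v_i$.

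The core step extracts $m-2$ deleted edges from each feedback arc. Fix $f=(v_i,v_j)\in F$ and a forward path from $v_j$ to $v_i$ in $G-F$; repeatedly shortcut it along any present forward chord of $G$ until none remains, obtaining a forward, chord-free path $P=(v_j=x_0,x_1,\dots,x_p=v_i)$ in $G$. Since $f$ together with $P$ is a directed cycle of length $p+1$ and $G$ is $m$-free, $p\ge m$. For each $b$ with $2\le b\le m-1$ the forward arc $x_0\to x_b$ is absent by chord-freeness, and the reverse arc $x_b\to x_0$ is also absent, since otherwise $x_0\to x_1\to\cdots\to x_b\to x_0$ would be a directed cycle of length $b+1\le m$. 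Hence each pair $\{x_0,x_b\}=\{v_j,x_b\}$ is non-adjacent in $G$; as $G$ arises from a tournament, it must be one of the $t$ deleted edges. This yields $m-2$ distinct deleted edges incident to $v_j$, and, crucially, each is recorded as the \emph{forward} (left-to-right) missing pair $(v_j,x_b)$ with $v_j$ before $x_b$ in $\sigma$.

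Finally I would assemble these charges. Summing over $F$ produces $(m-2)\,\lvert F\rvert$ charged forward missing pairs, and if they are pairwise distinct then $(m-2)\,\lvert F\rvert\le t$, which is exactly the desired bound. Because every charged pair is oriented left-to-right, a deleted edge can never be charged from both of its endpoints, so the only possible collisions are between two feedback arcs sharing the same head $v_j$; ruling these out is the main obstacle. I expect to handle it by making one coherent choice at each head---using a single chord-free forward out-structure rooted at $v_j$, so that the selected nearest missing forward pairs for distinct feedback arcs into $v_j$ are forced to be distinct---or, failing a clean selection, by an induction on $t$ in which one feedback arc together with its $m-2$ private deleted edges is removed at each step. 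Verifying that each feedback arc genuinely retains $m-2$ deleted edges of its own is the crux of the proof; the remainder is the routine bookkeeping above.
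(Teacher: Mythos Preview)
The paper does not prove this lemma at all; it is quoted from \cite{LX13} and used as a black box, so there is no in-paper argument to compare against and your proposal has to stand on its own.

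Your setup is correct: the backward arcs of an optimal ordering form a minimum feedback arc set $F$; each $f=(v_i,v_j)\in F$ lies on a cycle whose remaining arcs are forward; shortcutting gives a chord-free forward path $v_j=x_0\to\cdots\to x_p=v_i$ with $p\ge m$; and for $2\le b\le m-1$ the pair $\{v_j,x_b\}$ is indeed a non-edge of $G$. The gap is exactly where you locate it, and it is a genuine one that your sketch does not close. If several backward arcs share the head $v_j$ and all forward paths out of $v_j$ begin with the same short initial segment---for instance when $v_j$ has a unique forward out-neighbour---then the vertices $x_2,\dots,x_{m-1}$ are forced to coincide for all of them, and no ``coherent out-structure rooted at $v_j$'' can manufacture disjoint charge sets: there simply need not be enough distinct missing forward pairs based at $v_j$. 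Your induction alternative also fails as stated: removing a feedback arc from $G$ \emph{increases} the number of missing pairs, so the inductive parameter moves the wrong way, and nothing guarantees $m-2$ \emph{private} missing pairs for the arc being peeled off. As written, the argument does not establish $(m-2)\,|F|\le t$; rescuing the charging approach would require an injective assignment that does not funnel all $m-2$ charges through the single vertex $v_j$, and that is substantially more than the ``routine bookkeeping'' you describe.
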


Combining Lemma~\ref{lem1.5} with Lemma~\ref{lem1.6}, we can easily
get the following lemma.

\begin{lemma}\label{lem1.7}
If an $m$-free digraph $G$ is obtained from a tournament by deleting
$t$ edges, then there exists a vertex $v$ in $G$ such that
$d^+(v)\leq \sqrt{2t/(m-2)}$.
\end{lemma}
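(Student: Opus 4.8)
The plan is simply to compose the two preceding lemmas, feeding the edge set produced by Lemma~\ref{lem1.6} directly into the hypothesis of Lemma~\ref{lem1.5}. No new combinatorial idea is needed; the whole content is a one-step chaining, which is why the statement can be obtained ``easily.''

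First I would invoke Lemma~\ref{lem1.6}. Because $G$ is an $m$-free digraph obtained from a tournament by deleting $t$ edges, that lemma supplies a set of at most $t/(m-2)$ edges of $G$ whose removal makes $G$ acyclic. The essential observation is that these deletions are performed within $G$, so $G$ itself can be turned into an acyclic digraph by deleting only $t/(m-2)$ edges.

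Next I would apply Lemma~\ref{lem1.5} to $G$ with the parameter $t$ replaced by $t/(m-2)$. Since $G$ becomes acyclic after deleting this many of its own edges, Lemma~\ref{lem1.5} yields a vertex $v$ of $G$ with
$$ d^+(v)\leq\sqrt{2\cdot\frac{t}{m-2}}=\sqrt{\frac{2t}{m-2}}, $$
which is exactly the claimed bound.

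Since the derivation is immediate, there is no substantial obstacle; the only point worth verifying is that the out-degree appearing in Lemma~\ref{lem1.5} is measured in the original digraph (here $G$) rather than in the acyclicized graph $G$ minus the deleted edges, so that the conclusion genuinely bounds $d^+_G(v)$. This is clear from the statement of Lemma~\ref{lem1.5}, and hence the chaining is valid and produces the stated inequality.
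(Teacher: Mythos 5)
Your proposal is correct and follows exactly the paper's own proof: apply Lemma~\ref{lem1.6} to obtain a set of $t/(m-2)$ edges whose deletion makes $G$ acyclic, then invoke Lemma~\ref{lem1.5} with parameter $t/(m-2)$ to get the bound $\sqrt{2t/(m-2)}$. Your closing remark that the out-degree bound in Lemma~\ref{lem1.5} refers to the original digraph $G$ is a valid point of care, and it holds as you say.
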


\begin{proof}
From Lemma~\ref{lem1.6}, an $m$-free $G$ is obtained from a
tournament by deleting $t$ edges, then we can delete $t/(m-2)$ edges
from $G$ to make it acyclic. From Lemma~\ref{lem1.5}, there exists a
vertex $v$ in $G$ such that $d^+(v)\leq \sqrt{2t/(m-2)}$.
\end{proof}

\vskip6pt

\section{{ Proof} of Theorem~\ref{thm1.2}}

We first prove the first conclusion { by induction on the number of
vertices}.
Theorem~\ref{thm1.2} is trivial for any digraph with 1 or 2
vertices. { Assume that Theorem~\ref{thm1.2} holds for all digraphs
with less than $n$ vertices. Let $G$ }be an $m$-free digraph with
$n$ vertices, $n\geq 3$ and $m\geq 3$. Assume to the contrary that
$d^{++}(v)<\lambda_m d^+(v)$ for any vertex $v$ in $G$, where
$\lambda_m$ { is the unique real root of $g_m(x)$ in the interval
$(0,1)$}. Our purpose is { to show that the assumption leads to a
contradiction}.

Let $u$ be a vertex in $G$ with minimum out-degree. Let $A=N^+(u)$,
$B=N^{++}(u)$, $a=|A|$ and $b=|B|$. By our assumption, we have
\begin{equation}\label{e2.1}
 b=d^{++}(u)< \lambda_m d^+(u)=\lambda_m a.
 \end{equation}

For any two disjoint subsets $X, Y\subseteq V(G)$, let $E(X, Y)$
denote the edges from $X$ to $Y$ and $e(X,Y)=|E(X,Y)|$. Since $G$ is
simple and digonless, we have that
 $$
 e(X,Y)+e(Y,X)\leq |X|\cdot|Y|.
 $$

For simplicity, for any subset $S\subseteq V(G)$, use $S$ to denote
the subgraph of $G$ induced by $S$. By the definitions of $A$ and
$B$, we have
\begin{equation}\label{e2.2}
\sum\limits_{v\in A} d_G^+(v)=|E(A)|+e(A,B).
 \end{equation}

By the choice of $u$, $d^+(v)\geq d^+(u)=a$ for any $v\in V(G)$, and
so
\begin{equation}\label{e2.3}
\sum\limits_{v\in A} d_G^+(v)\geq |A|\cdot d^+(u)=a^2.
 \end{equation}

Since $|E(A)|\leq a(a-1)/2$, we have
 $$
 e(A,B)=\sum\limits_{v\in A} d_G^+(v)-|E(A)|\geq a^2-a(a-1)/2= a(a+1)/2.
 $$
It follows that there exists $v\in A$ such that $e(v,B)\geq
e(A,B)/a\geq (a+1)/2$. Since $b=|B|\geq e(v,B)$ for any $v\in A$, it
follows that $\lambda_m a>b\geq e(v,B)\geq (a+1)/2>a/2$, which
implies
 \begin{equation}\label{e2.4}
\lambda_m > 1/2.
 \end{equation}

The subgraph $A$ can be obtained from a tournament of order $a$ by
deleting $t$ edges. Let $\theta =t/a^2$. Since $0\leq t\leq
a(a-1)/2$, we have { $0\leq \theta\leq(a-1)/2a <1/2$} and
\begin{equation}\label{e2.5}
|E(A)|=a(a-1)/2-t=(1/2-\theta)a^2-a/2<(1/2-\theta)\,a^2.
 \end{equation}
Combining (\ref{e2.2}), (\ref{e2.3}) with (\ref{e2.5}), we have that
\begin{equation}\label{e2.6}
e(A,B)=\sum\limits_{v\in A}
d_G^+(v)-|E(A)|>a^2-(1/2-\theta)a^2=(1/2+\theta)\,a^2.
 \end{equation}

Since $G$ is $m$-free, it follows that the subgraph $A$ is $m$-free.
From Lemma~\ref{lem1.7}, { there is a vertex $w_0\in A$} such that
\begin{equation}\label{e2.7}
{ d_A^+(w_0)}\leq \sqrt{2t/(m-2)}=a\sqrt{2\theta/(m-2)}.
 \end{equation}

{ Let $d_B^+(w_0)=|N_B^+(w_0)|$, then $d_B^+(w_0)\leq |B|=b$. Since
$d_A^+(w_0)+d_B^+(w_0)=d_G^+(w_0)$, it follows from (\ref{e2.1})
that $d_A^+(w_0)=d_G^+(w_0)-d_B^+(w_0)\geq d_G^+(w_0)-b\geq
a-\lambda_m a=(1-\lambda_m)\,a$}, that is,
\begin{equation}\label{e2.8}
{ d_A^+(w_0)}\geq (1-\lambda_m)\,a.
 \end{equation}
Combining (\ref{e2.7}) with (\ref{e2.8}), we have
$\sqrt{2\theta/(m-2)}a>(1-\lambda_m)\,a$, that is,
\begin{equation}\label{e2.9}
\theta>(m-2)(1-\lambda_m)^2/2.
 \end{equation}

Since $A$ is $m$-free and $|A|=a<n$, by induction hypothesis there
is a vertex { $w_1\in A$} such that { $|N_A^{++}(w_1)|\geq
\lambda_m|N_A^+(w_1)|$}, where $\lambda_m$ { is the unique real root
of $g_m(x)$ in the interval $(0,1)$}.

Let { $X=N_A^+(w_1)$, $Y=N_B^+(w_1)$} and $|Y|=d$. It follows from
(\ref{e2.1}) that
  \begin{equation}\label{e2.10}
  d=|Y|\leq |B|=b<\lambda_m\, a.
 \end{equation}

By the induction hypothesis,$|A-X|\geq |{ N_A^{++}(w_1)}|\geq
\lambda_m|X|$, that is, $(1+\lambda_m)|X|\leq |A|=a$. By
(\ref{e2.4}) $\lambda_m>\frac{1}{2}$, we have
 $$
 |X|\leq \frac{a}{1+\lambda_m}<\frac{2a}{3}.
 $$
By the choice of $u$, we have ${ d_G^+(w_1)}\geq d_G^+(u)=a$, and so
 \begin{equation}\label{e2.11}
d=|Y|=|{ N_G^+(w_1)}|-|X|>a-\frac{2a}{3}=\frac{a}{3}.
 \end{equation}
Combining (\ref{e2.10}) with (\ref{e2.11}), we have
\begin{equation}\label{e2.12}
a/3<d<\lambda_m\, a.
\end{equation}

For any $y\in Y$, use $d_{V-A-Y}^+(y)$ to denote the number of
out-neighbors of $y$ in $G$ not in $A\cup Y$. Since {
$d_G^{++}(w_1)< \lambda_m d_G^+(w_1)$} and { $d_A^{++}(w_1)\geq
\lambda_m d_A^+(w_1)$}, we have
 $$
 d_{V-A-Y}^+(y)\leq { d_G^{++}(w_1)-d_A^{++}(w_1)< \lambda_m d_G^+(w_1)-\lambda_m d_A^+(w_1)}=\lambda_m\, d.
 $$

Noting that $d_G^+(y)\geq d_G^+(u)=a$ and $\sum\limits_{y\in Y}
d_Y^+(y)=|E(Y)|\leq d(d-1)/2$, we obtain
 $$
 \begin{array}{ll}
 e(Y,A)&=\sum\limits_{y\in Y} |{ N_A^+(y)}|\\
        &\geq \sum\limits_{y\in Y} (a-d_{V-A-Y}^+(y)-d_Y^+(y))\\
        &> (a-\lambda_m d)\,d-\sum\limits_{y\in Y} d_Y^+(y)\\
        &> (a-\lambda_m d)\,d-d(d-1)/2\\
        &> (a-\lambda_m d-d/2)\,d,\\
\end{array}
 $$
that is
\begin{equation}\label{e2.13}
e(Y,A)>(a-\lambda_m d-d/2)d.
 \end{equation}

Combining(\ref{e2.1}), (\ref{e2.6}), (\ref{e2.9}) with
(\ref{e2.13}), we have
$$
 \begin{array}{ll}
 \lambda_m a^2&\geq ab\\
        &\geq e(A,B)+e(B,A)\\
        &\geq e(A,B)+e(Y,A)\\
        &> (1/2+\theta)\,a^2+(a-\lambda_m d-d/2)\,d\\
        &{ >} [1/2+(m-2)(1-\lambda_m)^2/2]\,a^2+(a-\lambda_m d-d/2)\,d\\
        &=-(\lambda_m+1/2)d^2+ad+[1/2+(m-2)(1-\lambda_m)^2/2]\,a^2,
\end{array}
 $$
 that is,
  \begin{equation}\label{e2.14}
\lambda_m
a^2>-(\lambda_m+1/2)\,d^2+ad+[1/2+(m-2)(1-\lambda_m)^2/2]\,a^2,
 \end{equation}
where $a/3<d<\lambda_m a$ (see (\ref{e2.12})). For $a/3\leq z\leq
\lambda_m\, a$, let the function
 $$
 f(z)=-(\lambda_m+1/2)z^2+az+[1/2+(m-2)(1-\lambda_m)^2/2]\,a^2.
  $$

Since $f(z)$ is a quadratic function with a negative leading
coefficient, the following inequality holds.
 \begin{equation}\label{e2.15}
 f(z)\geq \min\{f(a/3), f(\lambda_m a)\}\ \text{for any}\ z\in[a/3, \lambda_m a].
 \end{equation}
Combining (\ref{e2.14}) with (\ref{e2.15}), we have
 \begin{equation}\label{e2.16}
 \lambda_m a^2> f(d)\geq \min\{f(a/3), f(\lambda_m a)\}.
 \end{equation}

We first note that, since
 $$
 f(\lambda_m
 a)=\frac{a^2[-2\lambda_m^3+(m-3)\lambda_m^2-(2m-6)\lambda_m+(m-1)]}{2},
 $$
if $\lambda_m a^2>f(\lambda_m a)$, then
 $$
 \lambda_m
 a^2>\frac{a^2[-2\lambda_m^3+(m-3)\lambda_m^2-(2m-6)\lambda_m+(m-1)]}{2},
 $$
that is
 $$
 g_m(\lambda_m)=2\lambda_m^3-(m-3)\lambda_m^2+(2m-4)\lambda_m-(m-1)>0.
 $$
This fact shows that $\lambda_m$ { is not a root of the polynomial
$g_m(x)$, which contradicts our assumption} on $\lambda_m$.

It follows that $\lambda_m a^2\leq f(\lambda_m a)$, and so
$\lambda_m a^2>f(a/3)$ by (\ref{e2.16}). Since
 $$
 f(a/3)=\frac{a^2[9(m-2)\lambda_m^2-(18m-34)\lambda_m+(9m-4)]}{18}.
 $$
we have
 $$
 \lambda_m a^2>\frac{a^2[9(m-2)\lambda_m^2-(18m-34)\lambda_m+(9m-4)]}{18}.
 $$
Simplifying this inequality, we obtain
 $$
 9(m-2)\lambda_m^2-(18m-16)\lambda_m+(9m-4)<0.
 $$
This implies
\begin{equation}\label{e2.17}
\lambda_m>\frac{9m-8-\sqrt{54m-8}}{9(m-2)}.
 \end{equation}

{ Now we show (\ref{e2.17}) is a contradiction to that $\lambda_m$
is the only root in the interval $(0,1)$ of the polynomial
$g_m(x)$.} We rewrite the polynomial $g_m(x)$ as
\begin{equation}\label{e2.18}
g_m(x)=\frac 19(p(x)-q(x)),
 \end{equation}
where
 $$
 \begin{array}{rl}
 &p(x)=18x^3+9x^2-20x+5,\\
 &q(x)=9(m-2)x^2-(18m-16)x+(9m-4).
 \end{array}
 $$

The polynomial $q(x)$ has a real root
 \begin{equation}\label{e2.19}
 \varphi_m=\frac{9m-8-\sqrt{54m-8}}{9(m-2)},
 \end{equation}
that is
\begin{equation}\label{e2.20}
q(\varphi_m)=0.
 \end{equation}
Comparing (\ref{e2.17}) with (\ref{e2.19}), we have
\begin{equation}\label{e2.21}
\lambda_m\geq \varphi_m \ \ {\rm for}\ m\geq 3.
 \end{equation}
Since
 $$
 \begin{array}{rl}
 \varphi_m &=1+\frac{10-\sqrt{54m-8}}{9(m-2)}\\
  &=1+\frac{108-54m}{9(m-2)(10+\sqrt{54m-8})}\\
  &=1-\frac{6}{10+\sqrt{54m-8}},
  \end{array}
 $$
it is easy to see that { $\varphi_m$ is strictly increasing with $m$
for $m\geq 3$}.
 Thus we have
\begin{equation}\label{e2.22}
\varphi_m\geq
\varphi_3=1-\frac{6}{10+\sqrt{154}}>1-\frac{3}{10}=\frac{7}{10}.
 \end{equation}

A simple calculation gives us that $p(x)$ is a strictly increasing
function for $x>\frac{7}{10}$ and $p(\frac{7}{10})=1.584>0$. Noting
that $g_m(x)$ is a strictly increasing function over the interval
$[0,1]$, and by (\ref{e2.18}), (\ref{e2.20}), (\ref{e2.21}),
(\ref{e2.22}), we have
$$
g_m(\lambda_m)> g_m(\varphi_m)
             =\frac{1}{9}[p(\varphi_m)-q(\varphi_m)]
             =\frac{1}{9}p(\varphi_m)\
             > \frac{1}{9}p(\frac{7}{10})
             > 0.
$$
This fact shows that $\lambda_m$ is not a root of the polynomial
$g_m(x)$, a contradiction to our assumption, and so the first
conclusion follows.

\vskip6pt

We now prove the second conclusion. Since
$g_m(x)=2x^3-(m-3)x^2+(2m-4)x-(m-1)$, $g_m(\lambda_m)=0$ and
 $$
 \begin{array}{ll}
 g_{m+1}(x)&=2x^3-(m-2)x^2+(2m-2)x-m\\
           &=2x^3-(m-3)x^2+(2m-4)x-(m-1)-x^2+2x-1\\
           &=g_m(x)-(1-x)^2,
\end{array}
 $$
for any $m\geq 3$ we have
 $$
 g_{m+1}(\lambda_m)=g_m(\lambda_m)-(1-\lambda_m)^2=-(1-\lambda_m)^2<0=g_{m+1}(\lambda_{m+1}).
 $$

Since { $g_m(x)$ is strictly increasing in the interval $(0,1)$ for
any $m\geq 3$} by Lemma~\ref{lem1.4}, it follows that
$\lambda_m<\lambda_{m+1}$, which implies that $\lambda_m$ is
increasing with $m$.

We rewrite $g_m(x)$ as
$$g_m(x)=2x(x^2-1)+2x^2-(m-1)(1-x)^2.$$

It is easy to check that
$\mu_m=\frac{\sqrt{m-1}}{\sqrt{m-1}+\sqrt{2}}\in (0,1)$ is a real
root of the polynomial $2x^2-(m-1)(1-x)^2$. It follows that
$g_m(\mu_m)=2\mu_m(\mu_m^2-1)<0=g_m(\lambda_m)$. Since $g_m(x)$ { is
strictly increasing in the interval $(0,1)$} by Lemma~\ref{lem1.4},
we have
$$0<\mu_m<\lambda_m<1.$$

Since $\lim\limits_{m \rightarrow +\infty} \mu_m=\lim\limits_{m
\rightarrow +\infty} \frac{\sqrt{m-1}}{\sqrt{m-1}+\sqrt{2}}=1$, it
follows that $\lim\limits_{m \rightarrow +\infty} \lambda_m=1$.

The proof of Theorem~\ref{thm1.2} is complete.



\begin{thebibliography}{s2}

\bibitem{CSY03}
G. Chen, J. Shen, R. Yuster, Second neighborhood via first
neighborhood in digraphs, Annals of Combinatorics 7 (2003) 15-20.

\bibitem{CGHZ16}
Z. Cohn, A. Godbole, E.W. Harkness and Y. Zhang, The number of
Seymour vertices in random tournaments and digraphs, Graphs and
Combinatorics 32 (5) (2016) 1805-1816.

\bibitem{DL95}
N. Dean, B. J. Latka, Squaring the tournament-an open problem,
Congressus Numberantium 109 (1995) 73-80.

\bibitem{FY07}
D. Fidler, R. Yuster, Remarks on the second neighborhood problem, J.
Graph Theory 55 (2007) 208-220.

\bibitem{F07}
D. C. Fisher, Squaring a tournament: a proof of Dean's conjecture,
J. Graph Theory 23 (1996) 43-48.

\bibitem{hhk07}
P. Hamburger, P. Haxell, and A. Kostochka, On the directed triangles
in digraphs, Electronic J. Combin. 14 (2007) Note 19.

\bibitem{H08}
Y. O. Hamidoune, On iterated image size for point-symmetric
relations, Combin. Probab. Comput. 17 (1) (2008) 61-66.

\bibitem{KL01}
Y. Kaneko, S. C. Locke, The minimum degree approach for Paul
Seymour's distance 2 conjecture, Congressus Numerantium 148 (2001)
201-206.

\bibitem{LX13}
H. Liang, J.-M. Xu, Minimum feedback arc set of $m$-free digraphs,
Information Processing Letters 113 (2013) 260-264.

\bibitem{L13}
A. Llad\'{o}, On the second neighborhood conjecture of Seymour for
regular digraphs with almost optimal connectivity, European J.
Combin. 34 (8) (2013) 1406-1410.

\bibitem{ZZ10}
T. Zhang, J. Zhou, The second neighborhood of triangle-free
digraphs, Graph Theory Notes of New York 58 (2010) 48-50.

\end{thebibliography}
\end{document}